\numberwithin{equation}{section}
\theoremstyle{plain}                     
\newtheorem{theorem}{Theorem}[section]
\newtheorem{lemma}[theorem]{Lemma}      
\theoremstyle{definition}                
\newtheorem{remark}[theorem]{Remark}
\newcommand{\R}{\mathbb R}
\newcommand{\N}{\mathbb N}
\newcommand{\C}{\mathbb C}
\newcommand{\e}{\varepsilon}
\newcommand{\semi}{m_1}
\newcommand{\cirlaw}{m_0}
\DeclareMathOperator\supp{supp}
\newcommand{\mres}{\mathbin{\vrule height 1.6ex depth 0pt width
0.13ex\vrule height 0.13ex depth 0pt width 1.3ex}}
\def\weakto{\rightharpoonup}
\title[The equilibrium measure for a nonlocal dislocation energy]{The equilibrium measure\\ 
for a nonlocal dislocation energy}
\author[M.G. Mora]{Maria Giovanna Mora} \author[L. Rondi]{Luca Rondi} \author[L. Scardia]{Lucia Scardia}
\address[M.G. Mora]{Dipartimento di Matematica, Universit\`a di Pavia, Italy}
\email{mariagiovanna.mora@unipv.it}
\address[L. Rondi]{Dipartimento di Matematica e Geoscienze, Universit\`a di Trieste, Italy}
\email{rondi@units.it}
\address[L. Scardia]{Department of Mathematical Sciences, University of Bath, United Kingdom} 
\email{L.Scardia@bath.ac.uk}
\begin{document}

\begin{abstract} 
In this paper we characterise the equilibrium measure for a nonlocal and \emph{anisotropic} weighted energy describing the interaction of positive 
dislocations in the plane. We prove that the minimum value of the energy is attained by a measure supported on the vertical axis and distributed according 
to the \emph{semi-circle} law, a well-known measure which also arises as the minimiser of purely logarithmic interactions in one dimension.
In this way we give a positive answer to the conjecture that positive dislocations tend to form \emph{vertical walls}.
This result is one of the few examples where the minimiser of a nonlocal energy is explicitly computed and the only one in the case of anisotropic kernels.
\end{abstract}

\maketitle

\begin{section}{Introduction}
In this paper we find explicitly the unique minimiser of the nonlocal energy 
\begin{equation}\label{ce}
I(\mu) = \iint_{\mathbb{R}^2\times \mathbb{R}^2} V(x-y) \,d\mu(x) \,d\mu(y) + \int_{\mathbb{R}^2} |x|^2 \,d\mu(x)
\end{equation}
defined on probability measures $\mu\in \mathcal{P}(\R^2)$, where $V$ is the interaction potential given by 
\begin{equation}\label{V:int0}
V(x)= -\log|x| + \frac{x_1^2}{|x|^2}, \qquad x=(x_1,x_2),
\end{equation}
and the second term in the energy acts as a confinement for the measure. For the precise definition of the energy $I$ see the beginning of Section~\ref{sec:2}.

The energy \eqref{ce} arises as the $\Gamma$-limit of the discrete interaction energy of a system of $n$ positive edge dislocations with Burgers vector $\mathbf{e_1}$, as $n$ tends to infinity. More precisely, $I$ is the $\Gamma$-limit of $w_n/n^2$, where
\begin{equation}\label{de}
w_n(x^1,\dots,x^n) = \sum_{i\neq j} V(x^i-x^j) + n\sum_i |x^i|^2, \qquad \{x^i\}\subset \mathbb R^2,
\end{equation}
with respect to the weak$^*$ convergence of the empirical measures $\frac1n\sum_i\delta_{x^i}$ (see Section~\ref{rmk:gamma}). 
Therefore, $I$ is the leading order or \textit{mean-field} behaviour of the Hamiltonian $w_n$, and the minimisers of $I$ represent the mean-field description of the minimisers of $w_n$, namely the equilibrium dislocation patterns at the mesoscale.  Although such minimisers have not been characterised analytically so far - neither in the discrete nor in the continuum case - they are conjectured to be vertical wall-like structures (see, e.g., \cite{DUSI, HL, LBN93}). This belief has triggered a considerable interest in dislocation walls in the engineering and mathematical literature, and interactions, upscaled behaviour and dynamics of walls have been thoroughly analysed (see, e.g., \cite{BaskMes10, Berdi, GPPS1, Hall11, vMM, SB93}). 

In this paper we give a positive answer to the conjecture. We prove that the minimiser of $I$ exists, is unique, and is given by a \textit{one-dimensional, vertical} measure, namely the semi-circle law on the vertical axis
$$
\semi:= \frac{1}{\pi}\delta_0\otimes \sqrt{2-x_2^2} \, \mathcal{H}^1\mres(-\sqrt2,\sqrt2).
$$

This is the first example of an anisotropic kernel for which the minimiser can be explicitly computed. Even in the radially symmetric case,
the explicit characterisation of the equilibrium measure has been done only for the Coulomb potential in any dimension and for the logarithmic potential in dimension one. 

In two dimensions the Coulomb potential, namely $V = -\log|\cdot|$, arises in a variety of contexts, such as, e.g., Fekete sets, orthogonal polynomials, random matrices, Ginzburg-Landau vortices, Coulomb gases.
For the same confinement term as in \eqref{ce}, the minimiser is given by the circle law $\cirlaw:= \frac{1}{\pi} \chi_{B_1(0)}$ (see, e.g., \cite{Fro, SaTo}, and the references therein). 
Although the radial component of the potential in \eqref{V:int0} is exactly the Coulomb kernel, 
the presence of the additional anisotropic term has a dramatic effect on the structure of the 
equilibrium measure. Unlike $\cirlaw$, the support of $\semi$ is one-dimensional and its density is not constant.

For the logarithmic potential in one dimension, corresponding to the so-called Log-gases energy (see, e.g., \cite{Meh, SSCG1d}), 
Wigner proved in \cite{Wi} that the semi-circle law is the unique minimiser. 
We note that the functional $I$ in \eqref{ce} coincides with the Log-gases energy on measures with support on the vertical axis, since
the anisotropic term vanishes on those measures.
Therefore if one could prove that the minimiser of $I$ is supported on the vertical axis, then the minimality of the semi-circle law would follow directly.

This is however not the strategy we use in this paper. Our approach consists of two steps: We first prove the strict convexity of $I$ on the class of measures with compact support and finite interaction energy. Strict convexity implies uniqueness of the minimiser and the equivalence between minimality and the Euler-Lagrange conditions for $I$. As a second step, we show that the semi-circle law satisfies the Euler-Lagrange conditions and hence is the unique minimiser of $I$.

For the proof of these two steps we could not rely on the machinery developed in the classical case of purely logarithmic potentials with external fields (see \cite{SaTo}), which is heavily based on  $-\log|\cdot|$ being radially
symmetric, and on it being the fundamental solution of the Laplace operator, since $V$ is neither. Similarly, although nonlocal energies are widely used and studied in the mathematical community, and the existence of their ground states and their qualitative properties have received great attention in recent years (see, e.g., \cite{BCLR, CCP, CCV, CHVY, SST}), the potential is typically required to be radially symmetric, or the singularity to be non-critical, so $V$ is not covered by their analysis.

\begin{subsection}{Our approach and main results.} 
Existence of minimisers of $I$ is straightforward, as well as the fact that minimisers have compact support. 
Our first result is the strict convexity of $I$, which entails uniqueness. As in the case of purely logarithmic interactions, strict convexity is a consequence of the following key result (see Remark~\ref{rmk:uniq}).

\begin{theorem}\label{lemma:cx}
Let $\mu_0, \mu_1 \in \mathcal{P}(\R^2)$ be measures with compact support and finite interaction energy, that is, $\int_{\R^2} (V\ast\mu_i) \, d\mu_i < + \infty$ for $i=0,1$. Then 
\begin{equation}\label{cvx0}
\int_{\R^2} V\ast (\mu_1-\mu_0) \,d(\mu_1-\mu_0) \geq 0,
\end{equation}
and the integral above is zero if and only if $\mu_0=\mu_1$.
\end{theorem}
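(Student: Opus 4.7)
My plan is to transport the bilinear form in \eqref{cvx0} to the Fourier side, where the potential $V$ acquires a manifestly nonnegative symbol, and then to pass back through a mollification argument.

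The first and most technical step is the computation of $\widehat V$ as a tempered distribution. Modulo distributions supported at $\xi=0$, one has the classical identity $\widehat{-\log|\cdot|}(\xi)=2\pi/|\xi|^2$. For the anisotropic term I would use the decomposition $\frac{x_1^2}{|x|^2}=\frac12+\frac12\cos 2\theta$ (in polar coordinates on $\R^2$): the angular part is a degree-zero homogeneous function in the angular $\pm 2$ mode, so by rotation covariance of the Fourier transform it must be carried to a degree-$(-2)$ homogeneous function of the form $C(\xi_1^2-\xi_2^2)/|\xi|^4$, and a single test against $(x_1^2-x_2^2)e^{-|x|^2/2}$ pins down the constant. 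Assembling the pieces yields the key identity
\[
\widehat V(\xi) \;=\; \frac{4\pi\,\xi_2^2}{|\xi|^4}\qquad(\xi\ne 0),
\]
again modulo terms supported at the origin. This is \emph{nonnegative}, vanishing only on the horizontal line $\{\xi_2=0\}$. Observe that neither summand of $V$ is individually positive definite on zero-mean measures (the anisotropic piece alone has Fourier transform $2\pi(\xi_2^2-\xi_1^2)/|\xi|^4$, which is sign-changing): the nonnegativity is a genuine cancellation between the logarithmic and anisotropic parts.

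Set $\nu:=\mu_1-\mu_0$ and $\nu_\varepsilon:=\nu\ast\phi_\varepsilon$, with $\phi_\varepsilon$ a standard mollifier, so that $\nu_\varepsilon\in C^\infty_c(\R^2)$ has zero mean. Since $\widehat{\nu_\varepsilon}(0)=0$ annihilates the point-mass contribution of $\widehat V$ at the origin, Plancherel yields
\[
\int_{\R^2}(V\ast\nu_\varepsilon)\,\nu_\varepsilon\,dx \;=\; \frac{1}{\pi}\int_{\R^2}\frac{\xi_2^2}{|\xi|^4}\,|\widehat{\nu_\varepsilon}(\xi)|^2\,d\xi \;\ge\;0.
\]
I would then let $\varepsilon\to 0^+$. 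On the left, the bounds $-\log|x|\le V(x)\le -\log|x|+1$ reduce all limit issues to the classical logarithmic case: the hypothesis $\int V\ast\mu_i\,d\mu_i<\infty$ together with a Cauchy--Schwarz-type argument for log-energies forces $\int V\ast\mu_0\,d\mu_1$ to be finite and gives dominated convergence. On the right, the pointwise bound $|\widehat{\nu_\varepsilon}|\le|\widehat\nu|$ (from $|\widehat{\phi_\varepsilon}|\le 1$) combined with the finite-energy hypothesis in its Fourier form $\int\xi_2^2|\xi|^{-4}|\widehat\nu|^2\,d\xi<\infty$ also yields dominated convergence. Passing to the limit,
\[
\iint_{\R^2\times\R^2} V(x-y)\,d\nu(x)\,d\nu(y) \;=\; \frac{1}{\pi}\int_{\R^2}\frac{\xi_2^2}{|\xi|^4}\,|\widehat\nu(\xi)|^2\,d\xi \;\ge\; 0,
\]
which is \eqref{cvx0}. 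Strictness follows at once: equality forces $\xi_2\,\widehat\nu(\xi)=0$ a.e., so $\widehat\nu\equiv 0$ on the open dense set $\{\xi_2\ne 0\}$; since $\nu$ has compact support, $\widehat\nu$ is entire by Paley--Wiener, hence identically zero, and so $\nu=0$, i.e., $\mu_0=\mu_1$.

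I expect the hard part to be the rigorous treatment of $\widehat V$ as a tempered distribution (the correct handling of the point-mass corrections at $\xi=0$ and of the non-$L^1$ character of $V$) and the compatibility of the mollification limit with only the finite-energy hypothesis, since $V$ has a genuine logarithmic singularity and the measures $\mu_i$ need not be absolutely continuous. As emphasized in the introduction, the classical radially symmetric machinery of Saff--Totik does not transfer (the potential is neither radial nor a fundamental solution of a second-order elliptic operator), and the available isotropic nonlocal-interaction toolbox does not accommodate the critical, sign-changing anisotropic correction $x_1^2/|x|^2$, so each estimate must be redone by hand.
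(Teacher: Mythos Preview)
Your strategy coincides with the paper's: compute $\hat V$ as a tempered distribution, observe that on test functions vanishing at the origin it acts as integration against $\frac{1}{\pi}\,\xi_2^2/|\xi|^4$, establish the Plancherel identity for smooth compactly supported zero-mean functions, and then pass to measures by approximation. Your direct mollification $\nu_\varepsilon=\nu\ast\phi_\varepsilon$ is a streamlined variant of the paper's two-stage approximation (first $L^2$ functions via Schwartz approximants and Fatou, then measures via the construction of \cite{MPS}); the strictness argument via $\hat\nu=0$ on $\{\xi_2\neq0\}$ and analyticity/continuity of $\hat\nu$ is the same.

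There is, however, a circularity in your limit on the Fourier side. You invoke dominated convergence with the majorant $\frac{\xi_2^2}{|\xi|^4}\,|\hat\nu|^2$, calling its integrability ``the finite-energy hypothesis in its Fourier form''. But that integrability is not a hypothesis: it is essentially what you are proving, and it does \emph{not} follow a priori from $\int V\ast\mu_i\,d\mu_i<\infty$, since the Plancherel identity is only available for zero-mean objects (for $\mu_i$ one has $\hat\mu_i(0)=1$, and the distributional $\hat V$ has a nontrivial point contribution at $0$). The fix is simple and is what the paper does: on the right-hand side use Fatou rather than dominated convergence, obtaining the inequality
\[
\int_{\R^2} V\ast\nu\,d\nu \;\ge\; \frac{1}{\pi}\int_{\R^2}\frac{\xi_2^2}{|\xi|^4}\,|\hat\nu(\xi)|^2\,d\xi,
\]
which already suffices for both \eqref{cvx0} and the equality case. (If you insist on equality, first use Fatou to deduce finiteness of the right-hand side, and \emph{then} apply dominated convergence.) A secondary point: your constants are inconsistent---you state $\hat V(\xi)=4\pi\,\xi_2^2/|\xi|^4$ but then write $\frac{1}{\pi}$ in the Plancherel line; with the convention $\hat\varphi(\xi)=\int\varphi(x)e^{-2\pi i\xi\cdot x}\,dx$ used in the paper, the correct density is $\frac{1}{\pi}\,\xi_2^2/|\xi|^4$ (see Lemma~\ref{FTV} and Remark~\ref{r:transV0}).
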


For purely logarithmic interactions, the proof of the analogous result to Theorem~\ref{lemma:cx} (see \cite[Lemma~1.8]{SaTo}) relies on ingeniously rewriting the logarithm, according to the following formula: 
\begin{equation}\label{log:square}
-\log|x-y| = \frac1{2\pi} \int_{|z|\leq R} \frac{1}{|z-x|\,|z-y|} \, dz + \text{const.} - \log R + O\left(\frac1R\right),
\end{equation}
for $R$ sufficiently large. This trick allows one to rewrite the nonlocal term by `unfolding' the convolution, and transforming it into the integral of an exact square, which immediately implies the non-negativity of the integral. 

It is not clear whether a similar rewriting as in \eqref{log:square} is valid for the potential $V$, so we use a different approach. This is based on the intuition that, if we could rewrite the convolution in \eqref{cvx0} in Fourier space, then heuristically we would have that 
\begin{equation}\label{unfolding:V}
\int_{\R^2} V\ast (\mu_1-\mu_0) \,d(\mu_1-\mu_0) = \int_{\R^2} \hat V |\hat{\mu}_1-\hat{\mu}_0|^2 d\xi,
\end{equation}
and hence proving that $\hat{V}>0$ would imply the theorem.

As a first step, then, we compute the Fourier transform of $V$ (see Lemma~\ref{FTV}), which is a tempered distribution.
Unfortunately, the Fourier transform $\hat V$ is not a positive distribution (see Remark~\ref{r:transV0}), but we can show that $\hat{V}>0$ for positive test functions that are zero at $\xi=0$. The key remark is that this is enough to conclude, since 
$\mu_1-\mu_0$ is a neutral measure and thus 
the test function $|\hat{\mu}_1-\hat{\mu}_0|^2$ in \eqref{unfolding:V} is zero at $\xi=0$. This heuristic argument can in fact be made rigorous, and this is the heart of the proof of Theorem~\ref{lemma:cx}.

\

The explicit determination of the minimiser of $I$ is the main result of this paper.

\begin{theorem}\label{thm:chara}
The measure 
\begin{equation}\label{eq:sc-intro}
\semi = \frac{1}{\pi}\delta_0\otimes \sqrt{2-x_2^2} \, \mathcal{H}^1\mres(-\sqrt2,\sqrt2)
\end{equation}
satisfies the conditions
\begin{align}
&(V\ast \semi)(x) + \frac{|x|^2}2 = \frac12 +\frac12\log2 \qquad  \text{for every } x\in\supp \semi,
\label{EL-2-intro}
\\
&(V\ast \semi)(x) + \frac{|x|^2}2 \geq  \frac12 +\frac12\log2 \qquad \text{for every }x\in \R^2,
\label{EL-1-intro}
\end{align}
and hence is the unique minimiser of $I$.
\end{theorem}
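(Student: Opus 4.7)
The plan is to verify the Euler--Lagrange conditions \eqref{EL-2-intro} and \eqref{EL-1-intro} directly; the minimality assertion then follows from Theorem~\ref{lemma:cx}, since strict convexity on compactly supported finite-energy measures turns the Euler--Lagrange system into a sufficient condition for minimality.

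The equality \eqref{EL-2-intro} on $\supp\semi=\{0\}\times[-\sqrt{2},\sqrt{2}]$ is almost immediate: since $x_1=0$ on the support, the anisotropic term $x_1^2/|x|^2$ of $V$ vanishes, and $(V\ast\semi)(0,x_2)$ reduces to the one-dimensional logarithmic semicircle potential
\[
-\frac{1}{\pi}\int_{-\sqrt 2}^{\sqrt 2}\log|x_2-s|\sqrt{2-s^2}\,ds.
\]
By Wigner's classical computation this equals $\tfrac{1}{2}+\tfrac{1}{2}\log 2-\tfrac{x_2^2}{2}$ on $[-\sqrt 2,\sqrt 2]$, and adding $|x|^2/2=x_2^2/2$ gives the required constant.

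For \eqref{EL-1-intro} the natural approach is complex analysis. I would identify $(x_1,x_2)\in\R^2$ with $z:=x_2+ix_1\in\C$, so that $\supp\semi$ becomes the real segment $[-\sqrt 2,\sqrt 2]$, and consider the Cauchy--Stieltjes transform
\[
G(z):=\frac{1}{\pi}\int_{-\sqrt 2}^{\sqrt 2}\frac{\sqrt{2-s^2}}{z-s}\,ds = z-\sqrt{z^2-2},\qquad z\in\C\setminus[-\sqrt 2,\sqrt 2],
\]
with the branch of the square root determined by $G(z)=1/z+O(1/z^3)$ at infinity. Splitting $1/(z-s)$ into real and imaginary parts yields $\int x_1^2/|x-y|^2\,d\semi(y)=-x_1\operatorname{Im} G(z)$, while the logarithmic part of $V\ast\semi$ is the real part of the complex primitive of $-G$ in $z$, the additive constant being fixed by the asymptotic $-\log|z|+o(1)$ at infinity. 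Summing these contributions with $|x|^2/2$, a direct but crucial cancellation of all polynomial terms produces the compact expression
\[
(V\ast\semi)(x)+\frac{|x|^2}{2} = \frac{1}{2}\operatorname{Re}\bigl(\bar z\sqrt{z^2-2}\bigr) - \log\bigl|z+\sqrt{z^2-2}\bigr| + \frac{1}{2}+\log 2.
\]

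To conclude, I would use the substitution $\zeta:=z+\sqrt{z^2-2}$, which is a biholomorphism of $\{|\zeta|>\sqrt 2\}$ onto $\C\setminus[-\sqrt 2,\sqrt 2]$, with inverse $z=\zeta/2+1/\zeta$ and $\sqrt{z^2-2}=\zeta/2-1/\zeta$. A short algebraic computation yields $\operatorname{Re}(\bar z\sqrt{z^2-2})=|\zeta|^2/4-1/|\zeta|^2$, so \eqref{EL-1-intro} reduces to the one-variable inequality
\[
\frac{r^2-1}{2r}\geq \log r\qquad \text{for every } r:=|\zeta|^2/2\geq 1,
\]
which is immediate from the fact that both sides vanish at $r=1$ and the derivative of the difference equals $(r-1)^2/(2r^2)\geq 0$; equality holds exactly on $|\zeta|=\sqrt 2$, consistent with \eqref{EL-2-intro}. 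The main obstacle in the scheme is the explicit evaluation of the logarithmic part of $V\ast\semi$ and the correct choice of branch and of the additive constant at infinity that make all polynomial terms cancel; once this is in place, the problem reduces to an elementary scalar estimate.
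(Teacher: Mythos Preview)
Your argument is correct and in fact tighter than the paper's. Both proofs handle \eqref{EL-2-intro} in the same way and both ultimately rely on the Joukowsky map $\zeta=z+\sqrt{z^2-2}$, but the treatments of \eqref{EL-1-intro} diverge. The paper never writes $F$ in closed form: it differentiates $V\ast\semi$ in $x_1$, manipulates the resulting integrals into $\Re\bigl(z\,\partial_z g(z)\bigr)$ with $g(z)=\tfrac1{2\pi}\int_{-\pi}^{\pi}\log|z-\cos\theta|\,d\theta$, evaluates $g$ via the Joukowsky identity, and then checks positivity of the real part by hand. You instead integrate the Cauchy--Stieltjes transform $G$ once to obtain $F$ explicitly, and your cancellation of the polynomial terms is genuine: with $z=x_2+ix_1$ one finds $F(x)=\tfrac12\Re(\bar z\sqrt{z^2-2})-\log|z+\sqrt{z^2-2}|+\tfrac12+\log 2$, and after the Joukowsky substitution this depends on $r=|\zeta|^2/2$ alone, reducing \eqref{EL-1-intro} to $(r^2-1)/(2r)\geq\log r$ for $r\geq1$. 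This is more than an alternative bookkeeping: it reveals that the level sets of $F$ are the confocal ellipses $|\zeta|=\text{const}$, a structural fact that the derivative approach hides. The price is that you must fix the branch of $\sqrt{z^2-2}$ and the additive constant in the primitive of $-G$ carefully (via the asymptotics $-\log|z|+o(1)$ at infinity), whereas the paper sidesteps that by working with $\partial_{x_1}F$ directly; but you have identified this as the one delicate step, and it does go through.
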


The proof of Theorem~\ref{thm:chara} consists of two parts: In the first part we show that \eqref{EL-2-intro}--\eqref{EL-1-intro}
are the Euler-Lagrange conditions for $I$ relative to $\semi$, and that the Euler-Lagrange conditions uniquely characterise the minimiser of $I$. This is standard and can be done as in the purely logarithmic case. In the second part of the proof we show that
$\semi$ satisfies \eqref{EL-2-intro}--\eqref{EL-1-intro}. Since on $\supp \semi$ the potential $V$ reduces to the logarithm in one dimension, \eqref{EL-2-intro} follows from the minimality of the semi-circle law for the Log-gases energy.
Proving that $\semi$ satisfies also \eqref{EL-1-intro} is instead original and extremely challenging. 

We note that one of the two Euler-Lagrange conditions must fail for any measure other than the minimiser. This suggests that in order to prove \eqref{EL-1-intro} we need to estimate the function $V\ast \semi$ in $\R^2$ with great precision and accuracy. We achieve this by computing the derivative of $V\ast \semi$ with respect to $x_1$ \textit{exactly}, and by 
showing that, on account of \eqref{EL-2-intro} and a symmetry argument, \eqref{EL-1-intro} can be reduced to proving that 
the derivative with respect to $x_1$ of 
$$
F(x):=(V\ast \semi)(x) + \frac{|x|^2}{2}
$$
is positive in the first quadrant. This is in turn equivalent to the claim 
\begin{equation}\label{mira}
\Re\left(z\, \partial_z g(z)\right)  > 0 \qquad  \text{for every }z\in \C \text{ with }\Re z>0,\ \Im z >0,
\end{equation}
where 
$$
g(z) = \frac1{2\pi} \int_{-\pi}^{\pi}\log |z-\cos\theta| \,d\theta,
$$
$\partial_z$ denotes the complex derivative, and $\Re$, $\Im$ denote the real and imaginary part. 
By applying the Joukowsky transformation in the complex plane (see, e.g., \cite[Example~1.3.5]{SaTo}) the integral in the definition of $g$ can be 
explicitly computed, so that the claim \eqref{mira} can be checked directly.
\end{subsection}

\begin{subsection}{Discussion}
The research of this paper was driven by several aims. To start with, we wanted to investigate the minimality of dislocation walls, conjectured in the literature, by means of a solid mathematical approach. Secondly, we wanted to push the methods developed for nonlocal energies beyond the case of radially symmetric potentials, still retaining the critical, logarithmic singularity at zero. Finally, we wanted to explore the connection between the theory of vortices and the theory of dislocations, which has been successfully exploited so far in the case of discrete and screw dislocations (see, e.g., \cite{ACP11, AP14}).

The literature on nonlocal interaction energies is vast. Under the assumption that
the interaction potential is \emph{radially symmetric}, several authors have investigated qualitative properties of energy minimisers, from existence and uniqueness of the equilibrium measure \cite{CCV, CHVY}  to its  confinement \cite{CCP, CdFFLS} and to the regularity of the density of the minimisers \cite{CDM}. In all the aforementioned results radial symmetry is a crucial assumption and minimisers are radially symmetric.
This assumption is relaxed in \cite{BCLR}, where the authors face the interesting question of estimating the dimension of the support of minimisers
in terms of the singularity of the potential at zero.  
However, this result requires the singularity to be subcritical, which is not the case for the potential in \eqref{V:int0}, and 
only provides a lower bound on the dimension.

Moreover, in our case we have an explicit potential coming from dislocation theory, and we find the equilibrium measure \emph{explicitly}.
In this respect, our paper is more closely related to classical, Frostman-type results on existence, uniqueness and characterisation of the extremal measure for weighted energies. As in the classical case, we consider a radially symmetric and convex weight, which corresponds to the external field $|x|^2$ in the confinement term in \eqref{ce}. The additional anisotropic term in the potential $V$, however, makes our analysis substantially different from the case of a purely logarithmic interaction. 

\subsubsection{Extensions and open questions}
Various extensions of the present work would be interesting. The type of weight, or external field, in the energy \eqref{ce} is chosen for convenience; we plan to consider other types of fields, in analogy with the classical logarithmic case. From the mechanical point of view, this would correspond to testing the stability of vertical-wall structures under different loadings.

In particular, in the absence of an external field, one could ask the question of finding the extremal measure in the class of probability measures supported on a given set $E\subset \R^2$. For purely logarithmic interactions the extremal measure is supported on the boundary of $E$; it would be interesting to see whether the anisotropic term would still force a vertical support of the equilibrium measure as in the case treated in this paper.

The case of signed measures, corresponding to the presence of both positive and negative dislocations, is our long-term goal. The minimising arrangements for the discrete energy are conjectured to be \emph{Taylor lattices}, namely structures where vertical walls of positive dislocations are alternated with vertical walls of negative dislocations, but with a relative vertical shift. The mathematical treatment of discrete systems of positive and negative dislocations, as well as their limit behaviour for a large number of dislocations, are however still at a preliminary stage.

Finally, the results in this paper raise the intriguing question of understanding the effect of the anisotropy on the dimension of the support of minimisers.
We plan to investigate this issue for more general interaction potentials.

\end{subsection}

\begin{subsection}{Plan of the paper}
In Section~\ref{sec:2} we discuss existence and uniqueness of the minimiser of~$I$ and we prove Theorem~\ref{lemma:cx}. 
The derivation of the Euler-Lagrange conditions and the proof of Theorem~\ref{thm:chara} are the subject of Section~\ref{sec:3}.

\end{subsection}

\end{section}


\begin{section}{Existence and uniqueness of the minimiser of $I$}\label{sec:2}

In this section we prove existence and uniqueness of the minimiser of the nonlocal energy $I$ in \eqref{ce}.
We start by providing the precise definition of $I$.
The interaction potential $V$ is defined by
\begin{equation}\label{V:int}
V(x) := -\log|x| + \frac{x_1^2}{|x|^2}
\end{equation}
for $x=(x_1,x_2)\in\R^2$, $x\neq0$, and is extended to $x=0$ by continuity,
that is, $V(0):=+\infty$.
Let $\mathcal{P}(\mathbb{R}^2)$ denote 
the class of all positive Borel measures on $\R^2$ with unitary mass.
For every $\mu\in \mathcal{P}(\mathbb{R}^2)$ we define
\begin{equation}\label{E:wdef}
I(\mu) := \iint_{\R^2\times \R^2} \left(V(x-y) + \frac12(|x|^2+|y|^2)\right) d\mu(x)\, d\mu(y).
\end{equation}
It is immediate to see that the integrand is non-negative and bounded from below; indeed, 
we have
\begin{eqnarray}
V(x-y) + \frac12(|x|^2+|y|^2) & \geq &
-\log|x-y| + \frac{1}{e} (|x|^2+|y|^2) +  \left(\frac12-\frac{1}{e}\right)(|x|^2+|y|^2) \nonumber\\
&\geq & -\log|x-y| + \frac{1}{2e} |x-y|^2 + \left(\frac12-\frac{1}{e}\right)\, (|x|^2+|y|^2) \nonumber\\
&\geq & \left(\frac12-\frac{1}{e}\right)\, (|x|^2+|y|^2). \label{bound:compact}
\end{eqnarray}
Therefore, the energy \eqref{E:wdef} is well defined on positive measures $\mu \in \mathcal{P}(\mathbb{R}^2)$, possibly equal to $+\infty$.
The representation \eqref{ce} of $I$ coincides with \eqref{E:wdef} whenever the first integral in \eqref{ce} is not $-\infty$.

\begin{subsection}{Existence of a minimiser of $I$}
By inequality \eqref{bound:compact} we deduce that
\begin{equation}\label{bound:2}
I(\mu) \geq \left(1-\frac{2}{e}\right) \int_{\R^2}|x|^2\,d\mu(x).
\end{equation}
This implies that $\inf I \geq 0$.

It is easy to see that if $\cirlaw= \frac{1}{\pi} \chi_{B_1(0)}$, then $I(\cirlaw)<+\infty$. Therefore, $\inf I <+\infty$.

Let now $(\mu_n)\subset \mathcal{P}(\R^2)$ be a minimising sequence for $I$. By the bound \eqref{bound:2} we 
deduce that the sequence $(\mu_n)$ is tight and therefore converges narrowly, up to a subsequence, to some $\mu\in \mathcal{P}(\R^2)$.
Since the functional $I$ is lower semicontinuous with respect to narrow convergence, the existence of a minimiser follows immediately. 

\end{subsection}

\begin{subsection}{Minimisers of $I$ have compact support.}\label{sec:cpt}
Let $\mu$ be a minimiser of $I$; in particular, $I(\mu)<+\infty$. By \eqref{bound:compact} we have that there exists a compact set $K\subset \R^2$ such that $\mu(K)>0$ and
\begin{equation}\label{growth:V}
V(x-y) + \frac12(|x|^2+|y|^2) > I(\mu) +1 \qquad \text{outside } K\times K.
\end{equation}
We now show that $\supp \mu\subset K$. If not, then $\mu(K)<1$; but in this case one can easily prove that the measure $\tilde \mu \in \mathcal{P}(\R^2)$ defined as 
$$
\tilde\mu:= \frac{\mu\mres K}{\mu(K)}
$$
has lower energy, against the minimality of $\mu$. Indeed, by \eqref{growth:V}, we have
\begin{eqnarray*}
I(\mu) &= &\iint_{K\times K}\left(V(x-y) + \frac12(|x|^2+|y|^2)\right) d\mu(x)\, d\mu(y) 
\\
&& {}+ \iint_{(K\times K)^c} \left(V(x-y) + \frac12(|x|^2+|y|^2)\right) d\mu(x)\, d\mu(y) \\
&> & (\mu(K))^2 I(\tilde\mu) + (1-(\mu(K))^2)(I(\mu) +1),
\end{eqnarray*}
and hence 
$$
I(\tilde\mu) < I(\mu) - \frac{1- (\mu(K))^2}{(\mu(K))^2} < I(\mu),
$$
which contradicts the minimality.

\end{subsection}

\begin{subsection}{Uniqueness of the minimiser of $I$.} As in the case of purely logarithmic interactions, uniqueness follows by the strict convexity of the energy on probability measures with compact support and finite interaction energy. 
Our proof of the strict convexity of the energy is however completely different and new, and is based on the computation of the  Fourier transform of the potential $V$, which we show to be strictly positive outside the origin.

As a preliminary step, we compute the Fourier transform of the potential $V$.

\begin{lemma}[Fourier transform of $V$]\label{FTV}
The Fourier transform $\hat V$ of $V$ is the tempered distribution given by 
\begin{equation}\label{hatV}
\langle \hat V, \varphi\rangle
 = \Big(\frac{1}{2}+\gamma+\log\pi \Big) \varphi(0)
+ \frac1\pi \int_{|\xi|\leq 1}(\varphi(\xi)-\varphi(0))\frac{\xi_2^2}{|\xi|^4}\, d\xi
+ \frac1\pi \int_{|\xi|>1}\varphi(\xi)\frac{\xi_2^2}{|\xi|^4} \,d\xi
\end{equation}
for every $\varphi\in \mathcal S$, where $\mathcal S$ denotes the Schwartz space and $\gamma$ is the Euler constant. 
\end{lemma}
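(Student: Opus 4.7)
The plan is to split $V = -\log|x| + x_1^2/|x|^2$ and compute the Fourier transform of each summand separately by analytic continuation of a suitable one-parameter family of Riesz-type kernels, before recombining and simplifying. Throughout I would work with the convention $\hat f(\xi) = \int_{\R^2} f(x)\,e^{-2\pi i\xi\cdot x}\,dx$, which is the one consistent with the Euler-constant and $\log\pi$ contributions appearing in \eqref{hatV}.

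For the logarithmic part I would start from the classical identity
\[
\widehat{|x|^{-s}}(\xi) = \pi^{s-1}\,\frac{\Gamma(1-s/2)}{\Gamma(s/2)}\,|\xi|^{s-2},\qquad 0<s<2,
\]
and take the $s$-derivative at $s=0$. Using $|x|^{-s} = 1 - s\log|x| + O(s^2)$ on the space side together with the Laurent expansions $\Gamma(s/2)^{-1} = (s/2)(1+s\gamma/2+O(s^2))$ and $\pi^{s-1} = \pi^{-1}(1+s\log\pi+O(s^2))$ on the frequency side, while splitting the paired integral $\int |\xi|^{s-2}\varphi(\xi)\,d\xi$ at $|\xi|=1$ in order to isolate its $2\pi\varphi(0)/s$ pole, and extracting the coefficient of $s$ on both sides yields
\[
\langle\widehat{-\log|x|},\varphi\rangle = (\gamma+\log\pi)\varphi(0) + \frac{1}{2\pi}\int_{|\xi|\leq 1}\frac{\varphi(\xi)-\varphi(0)}{|\xi|^2}\,d\xi + \frac{1}{2\pi}\int_{|\xi|>1}\frac{\varphi(\xi)}{|\xi|^2}\,d\xi.
\]

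For the anisotropic part I would decompose $x_1^2/|x|^2 = \tfrac12 + \tfrac12\,(x_1^2-x_2^2)/|x|^2$. The constant $\tfrac12$ contributes $\tfrac12\delta_0$. The remaining zero-mean, degree-zero piece is the harmonic polynomial $x_1^2-x_2^2$ normalised by $|x|^2$, and its Fourier transform can be computed either by Bochner's formula for Fourier transforms of solid harmonics or, alternatively, by the same analytic-continuation technique applied to the meromorphic family $(x_1^2-x_2^2)|x|^{-2-s}$ as $s\to 0^+$. This gives
\[
\widehat{\frac{x_1^2-x_2^2}{|x|^2}}(\xi) = -\frac{1}{\pi}\,\mathrm{p.f.}\,\frac{\xi_1^2-\xi_2^2}{|\xi|^4},
\]
with no subtraction of $\varphi(0)$ needed in the principal-value prescription because the angular factor $\xi_1^2-\xi_2^2$ averages to zero on every centred circle. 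Adding the three contributions, the delta-terms combine into $(\tfrac12+\gamma+\log\pi)\varphi(0)$, and the elementary identity
\[
\frac{1}{|\xi|^2} - \frac{\xi_1^2-\xi_2^2}{|\xi|^4} = \frac{2\xi_2^2}{|\xi|^4}
\]
merges the two separate principal-value integrals into the single one appearing in \eqref{hatV}, with the resulting factor $1/\pi$.

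The main technical obstacle is the precise bookkeeping of the constants produced by the two analytic continuations: both $\gamma$ and $\log\pi$ arise from the Laurent expansions of $\Gamma(s/2)$ and of $\pi^{s-1}$ near $s=0$, and must be tracked carefully together with the $1/s$ pole produced by $\int_{|\xi|\leq 1}|\xi|^{s-2}\,d\xi$, the cancellation of which is what produces the explicit coefficient in front of $\varphi(0)$. A secondary point of care is making sure that the two principal-value regularizations are written with the same $|\xi|=1$ cutoff, so that they can be added inside one single integral; any inconsistency here would shift the constant in front of $\varphi(0)$ by a spurious additive term.
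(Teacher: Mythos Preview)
Your proposal is correct and shares the paper's core strategy of computing $\hat V$ by analytic continuation of Riesz-type kernels. The organisation differs, however. Rather than treating $-\log|x|$ and $(x_1^2-x_2^2)/|x|^2$ separately, the paper first rotates by $\pi/4$ so that the anisotropic piece becomes $y_1y_2/|y|^2$, which it then rewrites as $-2\pi\, y_1\partial_{y_2}A(y)$ with $A(y)=-\tfrac{1}{2\pi}\log|y|$. This reduces the whole potential to $A$ and a derivative of $A$, and $A$ is approximated by a \emph{single} family of Riesz potentials $A_\beta$, with $\beta\to 2^-$ (the mirror of your $s\to 0^+$). The advantage is that the logarithmic and anisotropic contributions are handled by one limit computation; the l'H\^opital step producing $\gamma+\log\pi$ appears once, and the rotation is undone at the end. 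Your route, by contrast, avoids the rotation and the derivative trick by invoking the Bochner formula for the harmonic polynomial $x_1^2-x_2^2$ directly; it is arguably more transparent, at the cost of running two separate continuations and then merging the regularisations via the zero angular mean of $\xi_1^2-\xi_2^2$, exactly as you note. The delicate tracking of the constants is the same issue in both arguments.
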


\begin{proof}
Since $V\in L^1_{\mathrm{loc}}(\R^2)$ and has a logarithmic growth at infinity, we have that $V\in{\mathcal S}'$,
hence $\hat V\in{\mathcal S}'$. We recall that $\hat V$ is defined by the formula
$$
\langle \hat V, \varphi\rangle := \langle V, \hat \varphi\rangle \qquad \text{for every } \varphi\in{\mathcal S}
$$
where, for $\xi\in\R^2$,
\begin{equation}\label{hatphi}
\hat \varphi(\xi):=\int_{\R^2}\varphi(x)e^{-2\pi i\xi\cdot x}\, dx.
\end{equation}
It is convenient to rewrite $V$ as
$$
V(x)=- \log |x| +\frac12 +\frac12 \frac{x_1^2-x_2^2}{|x|^2}.
$$
If we consider the rotation $R:\R^2\to\R^2$ defined by
$$
R(x)=\frac{1}{\sqrt2}(x_1-x_2,x_1+x_2)
$$ 
for every $x\in\R^2$, then we obtain
$$
V(x)=2\pi F(R(x)),
$$
where
$$
F(y)=-\frac{1}{2\pi} \log |y| + \frac{1}{4\pi} + \frac{1}{2\pi}\frac{y_1y_2}{|y|^2}.
$$
Moreover, since $R$ is a rotation, we have that
\begin{equation}\label{Fourier-rot}
\hat V=2\pi(F\circ R)\,\hat{}
=2\pi\hat F\circ R.
\end{equation}
It is therefore sufficient to compute the Fourier transform of $F$. To this purpose, we observe that
$$
\frac{1}{2\pi}\frac{y_1y_2}{|y|^2}=- y_1\partial_{y_2}\Big( -\frac{1}{2\pi} \log |y|\Big), 
$$
hence, setting
$$
A(y):=-\frac{1}{2\pi} \log |y|,
$$
we have
$$
\hat F(\xi)=\hat A(\xi) +\frac{1}{4\pi} \delta_0(\xi) + \xi_2\partial_{\xi_1}\hat A(\xi),
$$
where we used that $\partial_{\xi_j}\hat f=-2\pi i(x_jf)\,\hat{}$ and $(\partial_{x_j}f)\,\hat{}=2\pi i\xi_j\hat f$.

Let now $\beta\in(0,2)$ and let $A_\beta$ be the Riesz potential (up to an additive constant) of order $\beta$,
defined by
$$
A_\beta(y):=\frac{\Gamma(1-\tfrac\beta2)}{\Gamma(\tfrac\beta2)2^\beta\pi}(|y|^{\beta-2}-1),
$$
where $\Gamma$ is the Gamma function. 
Since
\begin{equation}\label{Gamma}
\Gamma(1-\tfrac\beta2)=\frac{2\Gamma(2-\tfrac\beta2)}{2-\beta}, \qquad \Gamma(1)=1,
\end{equation}
one can show that, as $\beta\to2^-$,
$A_\beta$ converges pointwise to $A$,
thus $\hat A_\beta$ converges to $\hat A$ in the sense of tempered distributions (see, e.g., \cite[Chapter~4, page~151]{Fol}). Therefore, setting
$$
F_{\beta}(y):=A_\beta(y)+\frac{1}{4\pi}- y_1\partial_{y_2}A_\beta(y), 
$$
we deduce that $\hat F_{\beta}\to \hat F$ in the sense of tempered distributions, as $\beta\to2^-$.
It is well known that
$$
\hat A_\beta(\xi)=\frac{1}{(2\pi)^\beta|\xi|^\beta}
-\frac{\Gamma(1-\tfrac\beta2)}{\Gamma(\tfrac\beta2)2^\beta\pi}\delta_0(\xi).
$$
By straightforward computations we have
$$
\hat F_{\beta}(\xi)=\frac{1}{(2\pi)^\beta|\xi|^{\beta+2}}
(|\xi|^2-\beta\xi_1\xi_2)+\frac{1}{4\pi} \delta_0(\xi) 
-\frac{\Gamma(1-\tfrac\beta2)}{\Gamma(\tfrac\beta2)2^\beta\pi}\delta_0(\xi),
$$
where we used that $\xi_2\delta_0(\xi)=0$ in the sense of distributions.

Let now $\varphi\in\mathcal S$. Taking into account \eqref{Gamma} and the fact that
$$
\int_{|\xi|\leq 1}\frac{1}{(2\pi)^\beta|\xi|^{\beta+2}}
(|\xi|^2-\beta\xi_1\xi_2)\, d\xi
= \frac{1}{(2\pi)^{\beta-1}}\frac1{2-\beta},
$$
we compute
\begin{eqnarray}
\langle \hat F_{\beta}, \varphi\rangle
& = & \frac{1}{4\pi}\varphi(0)
+ \int_{|\xi|>1}\varphi(\xi)\frac{1}{(2\pi)^\beta|\xi|^{\beta+2}}
(|\xi|^2-\beta\xi_1\xi_2)\, d\xi
\nonumber
\\
&&
+ \int_{|\xi|\leq 1}(\varphi(\xi)-\varphi(0))\frac{1}{(2\pi)^\beta|\xi|^{\beta+2}}
(|\xi|^2-\beta\xi_1\xi_2)\, d\xi
\nonumber
\\
&&
+ \frac{1}{\Gamma(\tfrac\beta2)2^{\beta-1}\pi}\frac{\pi^{2-\beta}\Gamma(\tfrac\beta2)-\Gamma(2-\tfrac\beta2)}{2-\beta} \varphi(0).
\label{hatG}
\end{eqnarray}
An application of l'Hopital's rule shows that
$$
\lim_{\beta\to2^-}\frac{\pi^{2-\beta}\Gamma(\tfrac\beta2)-\Gamma(2-\tfrac\beta2)}{2-\beta}
=-\Gamma'(1)+\log\pi=\gamma+\log\pi,
$$
where we used that $\Gamma'(1)=-\gamma$. We can now pass to the limit in \eqref{hatG}, as $\beta\to2^-$ (note that
in the second integral on the right-hand side $|\varphi(\xi)-\varphi(0)|$ is at least or order $|\xi|$, 
as $\xi\to0$, so that integrability is guaranteed), and obtain
\begin{eqnarray*}
\langle \hat F, \varphi\rangle
& = & \frac{1}{4\pi}\varphi(0)
+ \int_{|\xi|>1}\varphi(\xi)\frac{1}{(2\pi)^2|\xi|^4}
(|\xi|^2-2\xi_1\xi_2)\, d\xi
\\
&&
+ \int_{|\xi|\leq 1}(\varphi(\xi)-\varphi(0))\frac{1}{(2\pi)^2|\xi|^4}
(|\xi|^2-2\xi_1\xi_2)\, d\xi
+ \frac{1}{2\pi}(\gamma+\log\pi) \varphi(0).
\end{eqnarray*}
By applying \eqref{Fourier-rot} we deduce \eqref{hatV}.
\end{proof}

\begin{remark}\label{r:transV0}
 By Lemma~\ref{FTV} we deduce that
\begin{equation}\label{transV0}
\langle \hat V, \varphi \rangle = \frac1{\pi}\int_{\R^2}\frac{\xi_2^2}{|\xi|^4}\varphi(\xi)\, d\xi
\end{equation}
for every $\varphi\in\mathcal S$ with $\varphi(0)=0$. Hence, $\langle \hat V, \varphi \rangle>0$ for
every $\varphi\in\mathcal S$ with $\varphi(0)=0$ and $\varphi\geq0$, $\varphi\not\equiv0$.

Note, however, that $\hat V$ is not positive on $\mathcal S$. Indeed, let us take $r_0\in(0,1)$, to be chosen later, and let us consider any radial function $\varphi\in C^\infty_c(B_{r_0}(0))$ such that $\varphi(0)>0$ and $0\leq\varphi\leq \varphi(0)$.
Following the notation of Lemma~\ref{FTV}, for any $\beta\in(0,2)$ we have
$$\langle \hat F_{\beta},\varphi\rangle\leq \varphi(0)\left(\int_{|\xi|\leq r_0}
\frac{1}{(2\pi)^\beta
|\xi|^{\beta+2}}
(|\xi|^2-\beta\xi_1\xi_2)\, d\xi +\frac{1}{4\pi} 
-\frac{\Gamma(1-\tfrac\beta2)}{\Gamma(\tfrac\beta2)2^\beta\pi}\right)=:C(r_0,\beta)\varphi(0).$$
Arguing as in the proof of Lemma~\ref{FTV}, we have that
$$\lim_{\beta\to 2^-}C(r_0,\beta)=\frac{1}{2\pi}\left(\gamma+\log(\pi r_0)+\frac 12\right).$$
It is enough to pick $r_0$ such that $\gamma+\log(\pi r_0)+\frac 12\leq -1$ to obtain that
$$\langle \hat V, \varphi\rangle \leq -\varphi(0)<0.$$
\end{remark}

We are now in a position to prove Theorem~\ref{lemma:cx}, which is the key step to 
deduce the strict convexity of the interaction energy on probability measures
with compact support (see Remark~\ref{rmk:uniq}).

\begin{proof}[Proof of Theorem~\ref{lemma:cx}]
We first prove the inequality \eqref{cvx0} for test functions in $\mathcal S$ and for functions in $L^2(\R^2)$  
with zero average, and then we extend the result to measures, by means of a careful approximation result.\smallskip

\noindent
\textit{Step~1: Inequality on test functions.}
For $\varphi\in {\mathcal S}$ we define
$$
\check \varphi(x)=\varphi(-x), \qquad \tau_x \varphi(y)=\varphi(y-x)
$$
for every $x,y\in\R^2$. Denoting by $\hat \varphi$ the Fourier transform of $\varphi$ defined in \eqref{hatphi},
we have that
\begin{equation}\label{antitrans}
 \check{\hat{\hat \varphi}}=\varphi
\end{equation}
(see, e.g., \cite[Theorem~7.7]{Rud}). Moreover, for $u\in{\mathcal S}'$ and $\varphi\in {\mathcal S}$ we define
$$
(u\ast \varphi)(x):=\langle u, \tau_x\check \varphi\rangle \qquad \text{for every } x\in\R^2.
$$
From \cite[Theorem~7.19]{Rud} it follows that $u\ast \varphi\in {\mathcal S}'$ and
\begin{equation}\label{F-conv}
(u\ast \varphi)\,\hat{} =\hat \varphi\,\hat u.
\end{equation}

We now prove that
\begin{equation}\label{inter>0}
\langle u\ast \varphi, \varphi\rangle= \langle \hat u, |\hat \varphi|^2\rangle
\end{equation}
for every $u\in{\mathcal S}'$ and $\varphi\in {\mathcal S}$. Let $\varphi\in {\mathcal S}$. We denote the conjugate of $\varphi$ in $\C$ by $\bar \varphi$.
Note that in our framework $\varphi$ is always a real-valued function, but $\hat \varphi$ may be complex-valued. 
By \eqref{antitrans} and \eqref{F-conv} we have that
\begin{equation}\label{tr-chain}
\langle u\ast \varphi, \varphi\rangle = \langle (u\ast \varphi)\,\hat{}, \check{\hat{\varphi}}\rangle =\langle \hat u, \hat \varphi\, \bar{\hat \varphi}\rangle,
\end{equation} 
where we used that $\check{\hat\varphi}=\hat{\check\varphi}$
and that
$$
 \check{\hat{\varphi}}(\xi)= \int_{\R^2} \varphi(x)e^{2\pi i\xi\cdot x}\, dx
=\bar{\hat{ \varphi}}.
$$
This proves \eqref{inter>0}.

Let now $\varphi\in {\mathcal S}$ be such that $\int_{\R^2}\varphi(x)\,dx=0$; note that $\hat \varphi(0)=\int_{\R^2}\varphi(x)\,dx=0$.
By \eqref{inter>0} applied to $u=V$, where $V$ is the interaction potential in \eqref{V:int}, we deduce that
\begin{equation}\label{strict:test}
\int_{\R^2} (V\ast \varphi) \varphi\, dx=\langle \hat V, |\hat \varphi|^2\rangle=\frac1{\pi}\int_{\R^2}\frac{\xi_2^2}{|\xi|^4}|\hat \varphi(\xi)|^2\, d\xi,
\end{equation}
where the last equality follows from Remark~\ref{r:transV0} since $\hat \varphi(0)=0$.\smallskip

\noindent
\textit{Step~2: Inequality on $L^2$ functions.}
Let $f\in L^2(\R^2)$ be such that $\int_{\R^2}f(x)\,dx=0$ and with compact support.
Let $(\varphi_k)\subset {\mathcal S}$ be a sequence converging to $f$ in $L^2(\R^2)$ with $\int_{\R^2}\varphi_k(x)\,dx=0$. In particular, $\hat\varphi_k\to\hat f$ in $L^2(\R^2)$
and $\hat\varphi_k(0)=0$ for every~$k$. Therefore, by \eqref{strict:test},
$$
\int_{\R^2} (V\ast \varphi_k) \varphi_k\, dx= \frac1{\pi}\int_{\R^2} \frac{\xi_2^2}{|\xi|^4} |\hat \varphi_k(\xi)|^2\, d\xi \qquad 
\text{for every } k.
$$
Passing to the limit as $k\to\infty$, we deduce that
\begin{equation}\label{ineqVf}
\int_{\R^2} (V\ast f) f\, dx \geq \frac1{\pi}\int_{\R^2} \frac{\xi_2^2}{|\xi|^4} |\hat f(\xi)|^2\, d\xi.
\end{equation}
Note that we can pass to the limit in the interaction energy since $V\in L^1_{\mathrm{loc}}(\R^2)$ and 
we can assume the supports of $\varphi_k$ and $f$ to be uniformly bounded. In the right-hand side we used Fatou's lemma. Thus, we have proved that \eqref{ineqVf} holds for every $f\in L^2(\R^2)$ with $\int_{\R^2}f(x)\,dx=0$ and compact support.\smallskip

\noindent
\textit{Step~3: Inequality on measures.}
Let $\mu_0, \mu_1 \in \mathcal{P}(\R^2)$ be as in the statement of the theorem, that is, such that 
\begin{equation}\label{finite12}
\int_{\R^2} (V\ast \mu_0)\, d\mu_0<+\infty, \qquad
\int_{\R^2} (V\ast \mu_1)\, d\mu_1<+\infty
\end{equation}
and with compact support. Let $\nu:=\mu_1-\mu_0$.
Note that $\nu$ is a bounded measure with compact support and $\int_{\R^2}\, d\nu=0$.

Assume now that there exist $(\mu_0^h)$, $(\mu_1^h)$ in $L^2(\R^2)$ with uniformly bounded compact supports
such that $\mu_i^h\geq0$,
\begin{equation}\label{muih0}
\int_{\R^2}\mu_i^h(x)\, dx=1,
\end{equation}
\begin{equation}\label{muih1}
\mu_i^h\weakto\mu_i \qquad \text{ narrowly, as } h\to0,
\end{equation}
and
\begin{equation}\label{muih2}
\lim_{h\to0}\int_{\R^2} (V\ast \mu^h_i)\mu^h_i\, dx = \int_{\R^2} (V\ast \mu_i)\, d\mu_i
\end{equation}
for $i=0,1$. We postpone the proof of \eqref{muih0}--\eqref{muih2} to Step~4. Set $\nu^h:=\mu^h_1-\mu^h_0$. Since $\nu^h\in L^2(\R^2)$, $\int_{\R^2}\nu^h(x)\,dx=0$, and $\nu^h$ has compact support, we can apply \eqref{ineqVf} to $\nu^h$ for every~$h$. We obtain
$$
\int_{\R^2} (V\ast \nu^h) \nu^h\, dx \geq \frac1{\pi}\int_{\R^2} \frac{\xi_2^2}{|\xi|^4} |\hat \nu^h(\xi)|^2\, d\xi \qquad
\text{for every }h.
$$
Therefore,
$$
\limsup_{h\to0}\int_{\R^2} (V\ast \nu^h) \nu^h\, dx \geq 
\liminf_{h\to0}\frac1{\pi}\int_{\R^2} \frac{\xi_2^2}{|\xi|^4} |\hat \nu^h(\xi)|^2\, d\xi
\geq \frac1{\pi}\int_{\R^2} \frac{\xi_2^2}{|\xi|^4} |\hat \nu(\xi)|^2\, d\xi,
$$
where the last inequality follows from Fatou's lemma and the fact that $\hat\nu^h\to\hat\nu$ pointwise (note in particular that, since $\nu$ is a bounded measure, $\hat \nu$ is continuous).
We now look at the left-hand side. We have
\begin{equation}\label{decomp-nu}
\int_{\R^2} (V\ast \nu^h) \nu^h\, dx
= \int_{\R^2} (V\ast \mu_1^h) \mu_1^h\, dx + \int_{\R^2} (V\ast \mu_0^h) \mu_0^h\, dx
- 2 \int_{\R^2} (V\ast \mu_1^h) \mu_0^h\, dx.
\end{equation}
The convergence of the first two integrals at the right-hand side is guaranteed by \eqref{muih2}. As for the last integral, we write
$$
\limsup_{h\to0} - 2 \int_{\R^2} (V\ast \mu_1^h) \mu_0^h\, dx
=- 2\liminf_{h\to0} \int_{\R^2} (V\ast \mu_1^h) \mu_0^h\, dx
\leq - 2 \int_{\R^2} (V\ast \mu_1) \, d\mu_0,
$$
where the last inequality follows from \eqref{muih1} by lower semicontinuity. Indeed,
$V$ is continuous and bounded from below on the uniformly bounded supports of $\mu_i^h$; thus, 
the lower semicontinuity of $\int (V\ast \mu_1)\, d\mu_0$ can be easily proved by considering truncations of $V$ from above.

Combining the previous equations together, we conclude that
\begin{equation}\label{crucial-ineq}
 \int_{\R^2} (V\ast \nu)\, d\nu\geq \frac1{\pi}\int_{\R^2} \frac{\xi_2^2}{|\xi|^4} |\hat \nu(\xi)|^2\, d\xi.
\end{equation}

If the left-hand side of \eqref{crucial-ineq} is equal to $0$,
then 
$$
\frac{\xi_2^2}{|\xi|^4} |\hat \nu(\xi)|^2=0 \qquad \text{for a.e.\ } \xi\in\R^2.
$$
Therefore, $\hat \nu(\xi)=0$ for a.e.\ $\xi$ with $\xi_2\neq0$. By continuity of $\hat\nu$ this implies
$\hat \nu(\xi)=0$ for every $\xi\in\R^2$. Thus, $\nu=0$, hence $\mu_0=\mu_1$.

We have therefore proved the thesis of the theorem.\smallskip

\noindent
\textit{Step~4: Approximation result.}
To prove \eqref{muih0}--\eqref{muih2} we proceed as in \cite[Theorem~3.3]{MPS}, Step~1 in the proof of the limsup inequality. We apply the approximation procedure described there to $\mu_0$ and $\mu_1$, separately; the $\mu^h_i$ defined in this way, for $i=0,1$, are in $L^2(\R^2)$, are non-negative, have uniformly bounded supports, and satisfy
\eqref{muih0} and \eqref{muih1}. 

To prove \eqref{muih2}, we argue as follows.
For $M>0$ we consider the truncated function $V_M:=V\wedge M$ and we write
$V=V_M + (V- V_M)$.
The function $V_M$ is bounded on bounded sets and continuous. Since the supports of $\mu^h_i$
are uniformly bounded, narrow convergence \eqref{muih1} yields
$$
\lim_{h\to0}\int_{\R^2} (V_M\ast \mu^h_i)\mu^h_i\, dx  =  \int_{\R^2} (V_M\ast \mu_i)\, d\mu_i
\leq \int_{\R^2} (V\ast \mu_i)\, d\mu_i
$$
for $i=0,1$. Therefore, \eqref{muih2} is proved if we show that
\begin{equation}\label{claim-oldp}
\lim_{M\to\infty}\limsup_{h\to0} 
\int_{\R^2}((V-V_M)\ast \mu^h_i)\mu^h_i\, dx=0 
\end{equation}
for $i=0,1$. Note also that we can replace $\R^2$ with a bounded domain in the integral above (in \cite[Theorem~3.3]{MPS} the integrals are on a bounded set $\Omega$ and not on $\R^2$) since the measures have uniformly bounded supports.
Since
\begin{equation}\label{Vbounds}
-\log |x|\leq V(x)\leq 1-\log |x|
\end{equation}
for every $x\neq0$, claim \eqref{claim-oldp} can be proved by repeating the argument in \cite[Theorem~3.3]{MPS} verbatim.
\end{proof}

\begin{remark}[Strict convexity and uniqueness of the minimiser of $I$]\label{rmk:uniq}
Theorem~\ref{lemma:cx} implies the strict convexity of $I$ on the class of probability measures with compact support and finite 
interaction energy. Indeed, let $\mu_0,\mu_1\in \mathcal{P}(\R^2)$ be two measures with compact support and finite
interaction energy such that $\mu_0\neq\mu_1$.
Inequality \eqref{cvx0} implies that
\begin{equation}\label{cvx0-mg}
2\int_{\R^2} V\ast \mu_1 \,d\mu_0 < \int_{\R^2} V\ast \mu_0 \,d\mu_0+
\int_{\R^2} V\ast \mu_1 \,d\mu_1.
\end{equation}
For any $t\in(0,1)$, set
$\mu_t:=t\mu_1+(1-t)\mu_0$
and compute
$$\int_{\R^2} V\ast \mu_t\,d\mu_t =
t^2\int_{\R^2} V\ast \mu_1 \,d\mu_1+2t(1-t)\int_{\R^2} V\ast \mu_1 \,d\mu_0+(1-t)^2
\int_{\R^2} V\ast \mu_0 \,d\mu_0.$$
Using \eqref{cvx0-mg}, we immediately infer that
$$
\int_{\R^2} V\ast \mu_t\,d\mu_t
< t \int_{\R^2} V\ast \mu_1 \,d\mu_1+(1-t) \int_{\R^2} V\ast \mu_0 \,d\mu_0.
$$

Since minimisers of $I$ have compact support and finite 
interaction energy, the property above implies uniqueness of the minimiser.
\end{remark}

\end{subsection}

\begin{subsection}{$\Gamma$-convergence.}\label{rmk:gamma}
The energy $I$ arises as the $\Gamma$-limit of the discrete interaction energies $w_n/n^2$, defined in \eqref{de}, as $n\to\infty$.
This can be proved by following the argument in \cite[Theorem~3.3]{MPS}, where a related energy is derived from a semi-discrete strain energy model.
In particular, the $\Gamma$-liminf inequality follows from a standard lower semicontinuity argument. For the $\Gamma$-limsup inequality
it is enough to consider a measure $\mu$ with $I(\mu)<+\infty$. The argument in Section~\ref{sec:cpt} ensures that one can also assume $\mu$ to have compact support.
In this case, since $V$ satisfies the bounds \eqref{Vbounds}, a recovery sequence can be constructed exactly as in \cite[Theorem~3.3]{MPS}.
\end{subsection}

\end{section}


\begin{section}{Characterisation of the minimiser of $I$: The semi-circle law.}\label{sec:3}
We start by characterising the minimiser of $I$ as the unique measure satisfying the Euler-Lagrange conditions for $I$. Then we will show that the semi-circle law satisfies such conditions.

\begin{subsection}{Euler-Lagrange conditions}
We now derive the Euler-Lagrange conditions for the functional $I$, and show that they characterise the minimiser. This procedure  is the same as for the logarithmic potential (see, e.g., \cite[Theorem~1.3]{SaTo}; see also \cite{CCP, SST}).

We first introduce the notion of capacity. For any compact set $K\subset \R^2$
we define the {\em capacity} of $K$ as
$$
\text{cap}(K):= \Phi \left(\inf_{\mu\in \mathcal{P}(K)} \iint_{\R^2\times\R^2} V(x-y)\,d\mu(x)\, d\mu(y) \right), \qquad \Phi(t) = e^{-t},
$$
where  $\mathcal{P}(K)$ is the class of all probability measures
with support in $K$. For any Borel set $B\subset \R^2$ the capacity of $B$ is defined as the supremum of the capacity of compact sets $K\subset B$. Finally, any set (not necessarily Borel) contained in a Borel set of zero capacity, is
considered to have zero capacity.

In the following we say that a property holds {\em quasi everywhere} (q.e.) in a set $A$
if the set of points in $A$ where the property is not satisfied has zero capacity.
Note that if $B$ is a Borel set with zero capacity and $\mu\in\mathcal{P}(\R^2)$
is a measure with compact support and finite interaction energy, then $\mu(B)=0$. In other words,
any measure with compact support and finite interaction energy does not charge sets of zero capacity.

We also note that the capacity is monotone increasing with respect to inclusion. Moreover, a countable union of sets with zero capacity has zero capacity.

\begin{theorem}\label{mu1-ch}
The minimiser $\mu\in \mathcal{P}(\R^2)$ of $I$ is uniquely characterised by the Euler-Lagrange conditions:
there exists $c\in\R$ such that
\begin{align}
&(V\ast \mu)(x) + \frac{|x|^2}2 = c \qquad \text{for }\mu\text{-a.e.\ }x\in \supp \mu,
\label{EL-2}
\\
&(V\ast \mu)(x) + \frac{|x|^2}2 \geq c \qquad \text{for q.e.\ }x\in \R^2.
\label{EL-1}
\end{align}
\end{theorem}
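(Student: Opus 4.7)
The plan is to follow the classical Frostman-type argument for weighted logarithmic energies (see, e.g., \cite[Theorem~1.3]{SaTo}), in which Theorem~\ref{lemma:cx} plays the role that strict positive-definiteness of the logarithmic kernel usually plays.

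First I would establish the necessity of \eqref{EL-2}--\eqref{EL-1} at the minimiser $\mu$. Set $U(x) := (V\ast \mu)(x) + \tfrac{|x|^2}{2}$. For any $\sigma \in \mathcal{P}(\R^2)$ with compact support and $I(\sigma) < +\infty$, the convex combination $\mu_t := (1-t)\mu + t\sigma$ gives a quadratic expansion of $I(\mu_t)$ in $t$, so minimality forces the right derivative at $t=0$ to be non-negative. This yields
\begin{equation*}
\int_{\R^2} U \, d\sigma \geq \int_{\R^2} U \, d\mu =: c
\end{equation*}
for every such $\sigma$. To upgrade this integral inequality to the pointwise bound \eqref{EL-1}, I would argue by contradiction: if the Borel set $\{U<c\}$ had positive capacity, it would contain a compact $K$ of positive capacity, which by the very definition of $\text{cap}$ carries a probability measure of finite interaction energy (and hence, by compactness of $K$, of finite $I$-energy), and testing the integral inequality against this measure yields a contradiction. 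For \eqref{EL-2}, since $\mu$ has finite interaction energy it does not charge capacity-zero sets, so \eqref{EL-1} gives $U \geq c$ $\mu$-a.e.; combined with $\int U\, d\mu = c$ this forces equality $\mu$-a.e.\ on $\supp\mu$.

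The sufficiency part rests on the symmetric identity
\begin{equation*}
I(\nu) - I(\mu) = 2\int_{\R^2} U \, d(\nu-\mu) + \int_{\R^2} V\ast(\nu-\mu)\, d(\nu-\mu),
\end{equation*}
valid for any $\nu \in \mathcal{P}(\R^2)$ with compact support and finite interaction energy. The second term is non-negative by Theorem~\ref{lemma:cx}. For the first, \eqref{EL-2} gives $\int U\, d\mu = c$, while \eqref{EL-1} together with the fact that $\nu$ does not charge capacity-zero sets gives $\int U\, d\nu \geq c$. Hence $I(\nu) \geq I(\mu)$, so any measure satisfying \eqref{EL-2}--\eqref{EL-1} is a minimiser of $I$; by the uniqueness established in Remark~\ref{rmk:uniq} it must coincide with the unique minimiser of $I$.

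The main obstacle I expect is the capacity step in the proof of \eqref{EL-1}: one needs that finite-$I$-energy measures do not charge capacity-zero sets, and conversely that sets of positive capacity carry probability measures of finite interaction energy. These facts are essentially built into the definition of $\text{cap}$ in terms of $V$, but to justify them rigorously I would verify that the standard properties of logarithmic capacity (monotonicity, countable subadditivity, and existence of an equilibrium measure on compacts of positive capacity) carry over to the anisotropic kernel $V$; this is possible because $V \in L^1_{\mathrm{loc}}(\R^2)$ and Theorem~\ref{lemma:cx} supplies the positive-definiteness on neutral measures that drives the classical arguments.
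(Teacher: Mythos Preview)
Your proposal is correct. The necessity argument (Step~1) is essentially the paper's: the same convex variation $(1-t)\mu + t\sigma$, the same integral inequality $\int U\,d\sigma \geq c$, and the same capacity contradiction for \eqref{EL-1}. Your contradiction is in fact slightly leaner than the paper's --- you test the integral inequality directly against an equilibrium measure $\tilde\nu$ on a compact $K\subset\{U\leq c-\tfrac1{n_0}\}$, whereas the paper builds the perturbation $\nu=\mu+\e\mu(E)\tilde\nu-\e\mu\mres E$ before testing; both reach the same conclusion.

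For sufficiency you take a genuinely different route. The paper assumes that \emph{both} the minimiser $\mu$ and a candidate $\tilde\mu$ satisfy \eqref{EL-2}--\eqref{EL-1} and shows, using both sets of conditions, that $I(t\mu+(1-t)\tilde\mu)\geq tI(\mu)+(1-t)I(\tilde\mu)$, which forces $\mu=\tilde\mu$ by the strict convexity of Remark~\ref{rmk:uniq}. Your argument is more self-contained: the quadratic expansion
\[
I(\nu)-I(\mu)=2\int U\,d(\nu-\mu)+\int V\ast(\nu-\mu)\,d(\nu-\mu)
\]
shows directly, via \eqref{EL-2}--\eqref{EL-1} for $\mu$ alone together with Theorem~\ref{lemma:cx}, that any compactly supported $\mu$ satisfying the Euler--Lagrange conditions is a global minimiser, and then uniqueness (Remark~\ref{rmk:uniq}) closes the loop. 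The paper's route uses Theorem~\ref{lemma:cx} only through strict convexity; yours invokes its full non-negativity statement on neutral measures in the sufficiency step, which is arguably the more transparent use of that result.
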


\begin{remark}
From condition \eqref{EL-2} it follows that the constant $c$ is given by 
$$
c= I(\mu)- \frac12 \int_{\R^2} |x|^2\, d\mu(x).
$$
\end{remark}

\begin{proof}[Proof of Theorem~\ref{mu1-ch}] We divide the proof into two steps: The derivation of the conditions \eqref{EL-2}--\eqref{EL-1}, and the proof of the fact that they characterise the minimiser $\mu$.\smallskip

\noindent
\textit{Step~1: Derivation of the Euler-Lagrange conditions.} 
We consider variations of the minimiser $\mu$ of $I$ of the following form: $(1-\e)\mu + \e\nu$, where $\e\in (0,1)$
and $\nu\in \mathcal{P}(\R^2)$ has compact support and satisfies $I(\nu)<+\infty$. The minimality of $\mu$ implies  
\begin{equation*}
I((1-\e)\mu + \e\nu) \geq I(\mu),
\end{equation*}
which we can rewrite more explicitly as 
$$
\e\left(\int_{\R^2} (2(V\ast \mu) + |x|^2) \,d\nu - (2(V\ast \mu) + |x|^2) \,d\mu\right) + \e^2 \int_{\R^2}(V\ast (\mu-\nu)) \,d(\mu -\nu) \geq 0.
$$
Since the coefficient of the $\e^2$ term is finite, we can divide the previous relation by $\e>0$ and let $\e\to 0^+$ to obtain 
\begin{equation}\label{int-EL}
\int_{\R^2} \left((V\ast \mu)(x) + \frac{ |x|^2}2\right ) \,d\nu(x) \geq  \int_{\R^2}\left((V\ast \mu)(x) + \frac{|x|^2}2\right) \,d\mu(x) =: c,
\end{equation}
which has to be true for every $\nu\in \mathcal{P}(\R^2)$ with compact support and such that $I(\nu)<+\infty$. 

Condition \eqref{int-EL} implies \eqref{EL-1}. Indeed, set $F(x):=(V\ast \mu)(x) + \frac{|x|^2}2$ and assume for contradiction that the set
$\{ x\in\R^2: F(x)<c\}$
has positive capacity. Then there exists $n_0\in\N$ large enough, so that the compact set
$$
K:=\Big\{x\in\R^2: |x|\leq n_0, \ F(x)\leq c-\frac1{n_0}\Big\}
$$
has positive capacity (note that $F$ is lower semicontinuous, which implies that $K$ is closed and thus compact). On the other hand, since
$$
\int_{\R^2} F(x)\, d\mu(x)=c,
$$
there must exist a Borel set $E$, disjoint from $K$, such that $\mu(E)>0$ and
$F(x)>c-\frac1{2n_0}$ for $\mu$-a.e.\ $x\in E$. 
Since $K$ has positive capacity, we deduce from the definition of capacity
that there exists $\tilde\nu\in \mathcal{P}(K)$ with finite interaction energy.
We now consider the measure $\nu\in\mathcal{P}(\R^2)$ defined by
$$
\nu:=\mu+\e\mu(E)\,\tilde\nu -\e\mu\mres E,
$$
where $\e>0$ is sufficiently small. We compute
\begin{eqnarray*}
\int_{\R^2} F(x)\, d\nu(x) & = &\int_{\R^2} F(x)\, d\mu(x)+\e \mu(E)\int_{\R^2} F(x)\, d\tilde\nu(x)
-\e \int_E F(x)\, d\mu(x)
\\
& \leq & c-\frac{\e \mu(E)}{2n_0},
\end{eqnarray*}
which contradicts \eqref{int-EL}. Therefore, \eqref{EL-1} is proved.

Since the minimiser $\mu$ does not charge sets of zero capacity, by \eqref{EL-1} we also have 
$$
(V\ast \mu)(x) + \frac{|x|^2}2 \geq c \qquad \text{for } \mu\text{-a.e.\ }x\in \R^2.
$$
Since
$$
\int_{\R^2}(V\ast \mu)(x)\,d\mu(x) + \frac12\int_{\R^2}|x|^2 \,d\mu(x) = c,
$$
the inequality above implies \eqref{EL-2}.\smallskip

\noindent
\textit{Step~2: Characterisation of the minimiser $\mu$.} Assume that $\tilde \mu \in \mathcal{P}(\R^2)$ with compact support satisfies \eqref{EL-2}--\eqref{EL-1} for some constant $\tilde c$, and define $\mu_t:=t\mu + (1-t)\tilde\mu$ for $t\in (0,1)$. Then 
\begin{eqnarray*}
I(\mu_t) &= & t \int_{\R^2} \left(V\ast \mu + \frac{ |x|^2}2\right ) \,d\mu_t + (1-t) \int_{\R^2} \left(V\ast \tilde\mu + \frac{ |x|^2}2\right ) \,d\mu_t + \frac12\int_{\R^2} |x|^2 \,d\mu_t\\
&\geq & tc + (1-t) \tilde c  + \frac12\int_{\R^2} |x|^2 \big(t \,d\mu + (1-t)\,d \tilde\mu\big) \\
&=& t\left(c + \frac12\int_{\R^2} |x|^2 \,d\mu\right) + (1-t)\left(\tilde c + \frac12\int_{\R^2} |x|^2 \,d\tilde \mu\right) 
= tI(\mu) + (1-t)I(\tilde\mu),
\end{eqnarray*}
which, by the strict convexity of $I$, implies that $\mu=\tilde\mu$.
\end{proof}

\end{subsection}

\begin{subsection}{The semi-circle law}
We now prove the main result of the paper, Theorem~\ref{thm:chara}, that is, we
show that the semi-circle law satisfies the Euler-Lagrange conditions derived in the previous section,
and thus is the unique minimiser of $I$.

\begin{proof}[Proof of Theorem~\ref{thm:chara}]
Set 
$$
F(x):=(V\ast \semi)(x) + \frac12|x|^2 \qquad \text{for every } x\in\R^2.
$$
First of all, we note that the interaction term is given by  
\begin{equation}\label{C-ast}
(V\ast \semi)(x) =\frac1{\pi} \int_{-\sqrt2}^{\sqrt{2}} \left(-\frac12 \log(x_1^2 + (x_2-y_2)^2 ) + \frac{x_1^2}{x_1^2 + (x_2-y_2)^2} \right)\sqrt{2-y_2^2} \, dy_2.
\end{equation}
Therefore, $V\ast \semi$ is the image of a continuous function through a weakly singular integral operator. We can conclude that $F$ is continuous over $\R^2$ and that it is $C^1$ on $\R^2\setminus (\supp\semi)$.

We split the proof into two steps. In the first step we investigate the behaviour of $F$ on the $x_2$-axis, in the second step we show that the Euler-Lagrange conditions are satisfied on the whole of~$\mathbb{R}^2$.\smallskip

\noindent
\textit{Step~1: Behaviour on the $x_2$-axis.}
This is classical, since it corresponds to the fact that the semi-circle law is a minimiser for the logarithmic potential in one dimension. For the sake of completeness, we repeat the arguments here.

On the $x_2$-axis, we have
\begin{equation}\label{C-ast-0}
(V\ast \semi)(0,x_2) =\frac1{\pi} \int_{-\sqrt2}^{\sqrt{2}} \left(-\frac12 \log((x_2-y_2)^2 ) \right)\sqrt{2-y_2^2} \, dy_2.
\end{equation}
We compute its derivative with respect to $x_2$, obtaining 
\begin{equation}\label{Hilbert}
\partial_{x_2} (V\ast \semi)(0,x_2) =
 \begin{cases}
-x_2 - \sqrt{x_2^2-2} \quad &\text{if } x_2< -\sqrt 2,\\
-x_2 \quad &\text{if }  x_2 \in [-\sqrt 2, \sqrt 2],\\
-x_2 + \sqrt{x_2^2-2} \quad &\text{if } x_2 >\sqrt 2.
\end{cases}
\end{equation}
In fact, for $|x_2|>\sqrt{2}$, it can be easily seen that
$$\partial_{x_2} (V\ast \semi)(0,x_2)=
 - \frac{1}{\pi} \int_{-\sqrt2}^{\sqrt{2}} \frac{\sqrt{2-y_2^2}}{x_2-y_2} \, dy_2.$$
 Such a formula remains true for $x_2\in (-\sqrt{2},\sqrt{2})$ in the sense of distributions, where the integral at the right-hand side is to be intended as the principal value. Therefore, for any $|x_2|\neq\sqrt{2}$, $\partial_{x_2} (V\ast \semi)(0,x_2)$ coincides with the Hilbert transform of $\chi_{(-\sqrt{2},\sqrt{2})}(y_2)\sqrt{2-y_2^2}$. This can be computed explicitly (see, e.g., \cite[Chapter~4]{Meh}) leading to formula \eqref{Hilbert} for any $|x_2|\neq \sqrt{2}$. By passing to the limit, we easily conclude that \eqref{Hilbert} holds in the classical sense for every $x_2\in\R$.
We immediately infer that, for some constant $c_1$, we have
\begin{equation}\label{ELsupp}
F(0,x_2)=(V\ast \semi)(0,x_2) + \frac{x_2^2}2=c_1\qquad \text{for every }x_2\in[-\sqrt{2},\sqrt{2}]
\end{equation}
and 
\begin{equation}\label{ELx2axis}
F(0,x_2)=(V\ast \semi)(0,x_2) + \frac{x_2^2}2>c_1\qquad  \text{for every }x_2\in\R \setminus [-\sqrt{2},\sqrt{2}].
\end{equation}

We observe that 
$$
c_1=(V\ast \semi)(0)=
I(\semi)- \frac12 \int_{\R^2} |x|^2\, d\semi(x) ,
$$
and we now compute $c_1$ and thus the energy of the semi-circle law.
We have
\begin{equation}\label{p:evaluation2}
c_1 = (V\ast \semi)(0) = \frac1{\pi} \int_{-\sqrt2}^{\sqrt{2}} \left(- \log|y_2| \right)\sqrt{2-y_2^2} \, dy_2 = \frac12+\frac12\log2.
\end{equation}
For the confinement term we have
\begin{equation}\label{conf_sc}
\int_{\R^2}|x|^2 \,d\semi = \frac{1}{\pi}\int_{-\sqrt2}^{\sqrt2} x_2^2 \sqrt{2-x_2^2} \,dx_2 = \frac12.
\end{equation}
Therefore we conclude that 
$$
I(\semi)=\frac34 +\frac12\log2\ (\approx 1.0966).
$$
\smallskip

\noindent
\textit{Step~2: Euler-Lagrange conditions.} We now show that $\semi$ satisfies \eqref{EL-2-intro}--\eqref{EL-1-intro}. 
Note that \eqref{EL-2-intro} follows immediately from \eqref{ELsupp} and \eqref{p:evaluation2}. 

It remains to show that $\semi$ satisfies the Euler-Lagrange condition \eqref{EL-1-intro}. We claim that $F(x) > \frac12 +\frac12\log2$ for every $x\in\R^2$ such that $x_1\neq 0$. Then, by \eqref{ELsupp} and \eqref{ELx2axis}, the proof would be concluded.
Since $F$ is even in $x_1$ and $x_2$, and again by \eqref{ELsupp} and \eqref{ELx2axis}, the claim follows if we show that
\begin{equation}\label{claim-pos}
\partial_{x_1} F(x_1,x_2) > 0 \qquad \text{for every } x_1> 0,\ x_2\geq 0.
\end{equation}

In the following we consider $x_1> 0$ and $x_2\geq 0$.
We observe that 
\begin{equation}\label{primitive:V}
V(x_1,x_2) = - \partial_{x_2}\left(-x_2+\frac12 x_2 \log(x_1^2+x_2^2)\right).
\end{equation}
By rewriting \eqref{C-ast}, using also \eqref{primitive:V}, we have
\begin{eqnarray*}
(V\ast \semi)(x) &= & \frac1{2\pi} \int_{-\sqrt2}^{\sqrt{2}} \int_{-\sqrt{2-y_2^2}}^{\sqrt{2-y_2^2}} \left(-\frac12 \log(x_1^2 + (x_2-y_2)^2 ) + \frac{x_1^2}{x_1^2 + (x_2-y_2)^2} \right) \,dy_1 \,dy_2\\
&= & \frac1{2\pi} \int_{-\sqrt2}^{\sqrt{2}} \int_{-\sqrt{2-y_1^2}}^{\sqrt{2-y_1^2}} \left(-\frac12 \log(x_1^2 + (x_2-y_2)^2 ) + \frac{x_1^2}{x_1^2 + (x_2-y_2)^2} \right) \,dy_2 \,dy_1\\
& = & 1+ \frac1{4\pi} \int_{-\sqrt2}^{\sqrt{2}} \Big(x_2 - \sqrt{2-y_1^2}\Big) \log\Big(x_1^2 + \Big(x_2- \sqrt{2-y_1^2}\Big)^2 \Big)\,dy_1\\
& & {}-\frac1{4\pi} \int_{-\sqrt2}^{\sqrt{2}} \Big(x_2 + \sqrt{2-y_1^2}\Big) \log\Big(x_1^2 + \Big(x_2+ \sqrt{2-y_1^2}\Big)^2 \Big)\,dy_1.
\end{eqnarray*}
Hence we have that 
\begin{eqnarray*}
\partial_{x_1} F(x_1,x_2) &= & x_1 + \frac1{4\pi} \int_{-\sqrt2}^{\sqrt{2}} \Big(x_2 - \sqrt{2-y_1^2}\Big) \frac{2x_1}{x_1^2 + \Big(x_2- \sqrt{2-y_1^2}\Big)^2}\,dy_1\\
& & {}-\frac1{4\pi} \int_{-\sqrt2}^{\sqrt{2}} \Big(x_2 + \sqrt{2-y_1^2}\Big)\frac{2x_1}{x_1^2 + \Big(x_2+ \sqrt{2-y_1^2}\Big)^2}\,dy_1.
\end{eqnarray*}
We now change variables, setting $y_1 = \sqrt 2 \sin\theta$, for $\theta \in \left[-\frac\pi2,\frac\pi2\right]$; setting also $\xi_i:= \frac{x_i}{\sqrt 2}$, we have 
\begin{eqnarray*}
\partial_{x_1} F(\sqrt{2}\xi_1,\sqrt{2}\xi_2) &= & \sqrt 2\xi_1 + \frac{\sqrt 2}{2\pi} \int_{-\frac\pi2}^{\frac\pi2} \frac{\xi_1(\xi_2 - \cos\theta)\cos\theta}{\xi_1^2 + (\xi_2- \cos\theta)^2}\,d\theta
 - \frac{\sqrt 2}{2\pi} \int_{-\frac\pi2}^{\frac\pi2} \frac{\xi_1(\xi_2 + \cos\theta)\cos\theta}{\xi_1^2 + (\xi_2+ \cos\theta)^2}\,d\theta\\
 &= &\sqrt 2\xi_1 + \frac{\sqrt 2}{2\pi} \int_{-\frac\pi2}^{\frac\pi2} \frac{\xi_1(\xi_2 - \cos\theta)\cos\theta}{\xi_1^2 + (\xi_2- \cos\theta)^2}\,d\theta
 + \frac{\sqrt 2}{2\pi} \int_{\frac\pi2}^{\frac{3\pi}2} \frac{\xi_1(\xi_2 - \cos\hat\theta)\cos\hat\theta}{\xi_1^2 + (\xi_2- \cos\hat\theta)^2}\,d\hat\theta\\
 & = &\sqrt 2\xi_1 + \frac{\sqrt 2}{2\pi} \int_{-\pi}^{\pi} \frac{\xi_1(\xi_2 - \cos\theta)\cos\theta}{\xi_1^2 + (\xi_2- \cos\theta)^2}\,d\theta,
\end{eqnarray*}
where we have used the substitution $\hat\theta=\theta+\pi$, and then the periodicity of the cosine function.

For what follows it is convenient to manipulate the expression above slightly, as 
\begin{equation*}
\partial_{x_1} F(\sqrt{2}\xi_1,\sqrt{2}\xi_2) = \frac{\sqrt 2}{2\pi}\xi_1^2 \int_{-\pi}^{\pi} \frac{\xi_1}{\xi_1^2 + (\xi_2- \cos\theta)^2}\,d\theta + 
\frac{\sqrt 2}{2\pi}\xi_1\xi_2 \int_{-\pi}^{\pi} \frac{\xi_2 - \cos\theta}{\xi_1^2 + (\xi_2- \cos\theta)^2}\,d\theta.
\end{equation*}
In terms of these new variables, the claim \eqref{claim-pos} corresponds to proving that 
\begin{equation*}
\xi_1^2 \int_{-\pi}^{\pi} \frac{\xi_1}{\xi_1^2 + (\xi_2- \cos\theta)^2}\,d\theta + 
\xi_1\xi_2 \int_{-\pi}^{\pi} \frac{\xi_2 - \cos\theta}{\xi_1^2 + (\xi_2- \cos\theta)^2}\,d\theta > 0 
\end{equation*}
for every $\xi_1> 0$, $\xi_2\geq 0$.
Clearly this is true for  
$\xi_2=0$, since the expression in this case reduces to 
$$
\xi_1^2 \int_{-\pi}^{\pi} \frac{\xi_1}{\xi_1^2 + (\cos\theta)^2}\,d\theta,
$$
which is positive for $\xi_1> 0$. Hence we only need to show that 
\begin{equation}\label{claim-pos_1}
\xi_1 \int_{-\pi}^{\pi} \frac{\xi_1}{\xi_1^2 + (\xi_2- \cos\theta)^2}\,d\theta + 
\xi_2 \int_{-\pi}^{\pi} \frac{\xi_2 - \cos\theta}{\xi_1^2 + (\xi_2- \cos\theta)^2}\,d\theta > 0
\end{equation}
for every $\xi_1> 0$, $\xi_2>0$.
To evaluate the integrals above, we will make use of the following identity:
\begin{equation}\label{identity:SaTo}
\frac1{2\pi} \int_{-\pi}^{\pi}\log |z-\cos\theta| \,d\theta =  \log|z+\sqrt{z^2-1}|- \log 2,
\end{equation}
where $z\in \mathbb{C}\setminus [-1,1]$ and $\sqrt{z^2-1}$ here and in what follows denotes the branch of the complex square root that behaves asymptotically as $z$ at infinity. Namely, for $z\in\mathbb{C}\setminus [-1,1]$ such that
$z=\rho e^{i\theta}$ with $\rho>0$ and $0\leq \theta<\pi$, we have $z^2-1=\rho_1e^{i\theta_1}$ with $\rho_1>0$ and $0\leq \theta_1<2\pi$ and $\sqrt{z^2-1}=\sqrt{\rho_1}e^{i\theta_1/2}$. Instead, if $z\in\mathbb{C}\setminus [-1,1]$ is such that
$z=\rho e^{i\theta}$ with $\rho>0$ and $\pi\leq \theta<2\pi$, we have $z^2-1=\rho_1e^{i\theta_1}$ with $\rho_1>0$ and $2\pi\leq \theta_1<4\pi$ and $\sqrt{z^2-1}=\sqrt{\rho_1}e^{i\theta_1/2}$. For the proof of identity \eqref{identity:SaTo} we refer to 
\cite[Example~1.3.5]{SaTo}, where the integral in \eqref{identity:SaTo}  is computed by applying the Joukowsky transformation.
 
Writing explicitly $z=\eta_1+i\eta_2$, and assuming that $\eta_1>0$ and $\eta_2>0$,
the left-hand side of \eqref{identity:SaTo} becomes 
\begin{equation}\label{identity:SaTo_1}
\frac1{2\pi} \int_{-\pi}^{\pi}\log |z-\cos\theta| \,d\theta=\frac1{4\pi} \int_{-\pi}^{\pi}\log \big((\eta_1-\cos\theta)^2 + \eta_2^2\big) \,d\theta=: g(\eta_1,\eta_2).
\end{equation}
From now on we write $g(z)$ or $g(\eta_1,\eta_2)$ to denote the function in \eqref{identity:SaTo}--\eqref{identity:SaTo_1}.

For the derivatives of $g$ with respect to $\eta_1$ and $\eta_2$ we have 
\begin{equation}\label{identity:SaTo_2}
\begin{array}{c}
\displaystyle \partial_{\eta_1} g (\eta_1,\eta_2)= \frac{1}{2\pi} \int_{-\pi}^{\pi} \frac{\eta_1-\cos\theta}{(\eta_1-\cos\theta)^2 + \eta_2^2}\,d\theta,\bigskip\\
\displaystyle \partial_{\eta_2} g (\eta_1,\eta_2)= \frac{1}{2\pi} \int_{-\pi}^{\pi} \frac{\eta_2}{(\eta_1-\cos\theta)^2 + \eta_2^2}\,d\theta,
\end{array}
\end{equation}
which correspond to the integrals in \eqref{claim-pos_1}, provided we pick $\eta_1=\xi_2$ and $\eta_2=\xi_1$. 

Using \eqref{identity:SaTo_2} we find that the claim \eqref{claim-pos_1} is equivalent to proving 
\begin{equation}\label{inturn}
\eta_1 \partial_{\eta_1} g(\eta_1,\eta_2) + \eta_2 \partial_{\eta_2} g(\eta_1,\eta_2)  > 0 \qquad \text{for every }\eta_1>0,\ \eta_2>0.
\end{equation}
Since $g$ is real-valued, \eqref{inturn} is in turn equivalent to 
\begin{equation}\label{claim:complex}
\Re\left(z\,\partial_z g(z)\right)  > 0 \qquad \text{for every }\eta_1>0,\ \eta_2>0,
\end{equation}
where $\partial_z = \frac12\partial_{\eta_1} -\frac{i}{2} \partial_{\eta_2}$, while
$\partial_{\bar z} = \frac12\partial_{\eta_1} +\frac{i}{2} \partial_{\eta_2}$.

To compute the complex derivative of $g$ it is convenient to rewrite $g$ as 
\begin{equation}\label{e:g}
g(z) = \frac12\log(h(z) \overline{h(z)})- \log 2, \qquad h(z):= z+\sqrt{z^2-1}.
\end{equation}
We note that, for $\eta_1,\eta_2>0$, the branch of the square root appearing in the definition of $h$ satisfies 
$\Re (\sqrt{z^2-1})>0$, $\Im (\sqrt{z^2-1})>0$, thus in particular we have $\Re( h(z))>0$, $\Im( h(z))>0$.

Moreover, $h$ is holomorphic, and therefore $\partial_{\bar z}h=0$, which implies that 
\begin{equation}\label{dzh}
\partial_z \bar h = \overline{\partial_{\bar z} h} =0. 
\end{equation}
On the other hand
\begin{equation}\label{dzh_1}
\partial_z h (z)= 1  +\frac{z}{\sqrt{z^2-1}} = \frac{h(z)}{\sqrt{z^2-1}} =  \frac{h(z) \overline{\sqrt{z^2-1}}}{|z^2-1|^2}.
\end{equation}
Using \eqref{dzh} and \eqref{dzh_1}, we can then compute the complex derivative of $g$:
$$
\partial_z g (z)= \frac12\frac{1}{|h(z)|^2} \partial_z(h(z) \overline{h(z)}) =  \frac12\frac{1}{|h(z)|^2} \left(\partial_zh(z)\,\overline{h(z)} + h(z)\, \partial_z\overline{h(z)} \right)
 = \frac12 \frac{\overline{\sqrt{z^2-1}}}{|z^2-1|^2}.
$$
Finally, we can compute the left-hand side of \eqref{claim:complex} for $z=\eta_1+i\eta_2$ with $\eta_1,\eta_2>0$:
\begin{eqnarray*}
\Re\left(z\, \partial_z g(z)\right) &= &\frac12 \frac1{{|z^2-1|^2}} \Re\left(z\, \overline{\sqrt{z^2-1}}\right) \\
& =  &\frac12 \frac1{{|z^2-1|^2}} \left(\eta_1\Re( \sqrt{z^2-1}) + \eta_2 \Im( \sqrt{z^2-1})\right) > 0,
\end{eqnarray*}
since, as said above, $\Re (\sqrt{z^2-1})>0$, $\Im (\sqrt{z^2-1})>0$ when $\eta_1,\eta_2>0$.

This proves \eqref{claim:complex}, and hence \eqref{claim-pos}, which implies that the measure $\semi$ satisfies the Euler-Lagrange condition \eqref{EL-1-intro} and concludes the proof of the theorem.
\end{proof}

\end{subsection}
\end{section}

\bigskip\bigskip

\noindent
\textbf{Acknowledgements.}
The first two authors are partly supported by GNAMPA--INdAM. MGM acknowledges support 
by the ERC under Grant No.\ 290888
``Quasistatic and Dynamic Evolution Problems in Plasticity and Fracture''.
LR acknowledges support by the Universit\`a di Trieste through FRA~2014.
LS acknowledges support by the EPSRC under the Grant EP/N035631/1 ``Dislocation patterns 
beyond optimality''.

\bigskip

\end{document}